\newcommand{\Ass}{\operatorname{Ass}}
\newcommand\ext{\operatorname{Ext}}
\newcommand\tor{\operatorname{Tor}}
\newcommand{\Tor}[4]{\operatorname{Tor}_{#1}^{#2}(#3,#4)}
\newcommand{\Ext}[4]{\operatorname{Ext}^{#1}_{#2}(#3,#4)}
\newcommand\fm{{\mathfrak m}}
\newcommand\fq{{\mathfrak q}}
\newcommand\Hom{\operatorname{Hom}}
\newcommand\beg{\operatorname{beg}}
\newcommand{\im}{\operatorname{Im}}
\newcommand{\reg}{\operatorname{reg}_R}
\newcommand{\xra}{\xrightarrow}
\newtheorem{theorem}{Theorem}[section]
\newtheorem{lemma}[theorem]{Lemma}
\newtheorem{corollary}[theorem]{Corollary}
\theoremstyle{definition}
\newtheorem{example}[theorem]{Example}
\newtheorem*{definition}{Definition}
\theoremstyle{remark}
\newtheorem{remark}[theorem]{Remark}
\newtheorem{chunk}[theorem]{}
\numberwithin{equation}{section}
\begin{document}

\title{Regularity of Tor for weakly stable ideals}
\author{Katie Ansaldi, Nicholas Clarke, Luigi Ferraro}
\begin{abstract}
It is proved that if $I$ and $J$ are weakly stable ideals in a polynomial ring $R=k[x_1,\ldots,x_n]$, with $k$ a field, then the regularity of $\tor^R_i(R/I,R/J)$ has the expected upper bound. We also give a bound for the regularity of $\ext_R^i(R/I,R)$ for $I$ a weakly stable ideal.
\end{abstract}

\maketitle
\section*{Introduction}

Let $k$ be a field. Let $R=k[x_1,\ldots,x_n]$ be a graded polynomial ring over $k$ with $|x_i|=1$ for every $i$. Let $M$ and $N$ be finitely generated graded $R$-modules. In \cite{EisenbudTor} it is shown that if $\dim\tor_1^R(M,N)\leq 1$ then
\begin{equation}
\reg \tor_i^R(M,N) \leq \reg M +\reg N +i\quad\text{for\;every }i. \label{mainineq}
\end{equation}

In general this bound may not hold.  Indeed,  assume it holds for $M=N=R/I$ where $I$ is an homogeneous ideal in $R$ and set $T_1=\tor_1^R(R/I,R/I)$. It is clear that $T_1\cong I/I^2$; hence using the exact sequence
\[
0\rightarrow I^2\rightarrow I\rightarrow T_1\rightarrow0
\]
we deduce from \ref{lemses} that
\[
\reg I^2\leq\max\{\reg I, \reg T_1+1\}.
\]
Since $\reg R/I=\reg I-1$, it follows
\[
\reg I^2\leq2\reg I.
\]
Hence, every ideal not satisfying the previous inequality gives an example where (\ref{mainineq}) does not hold. There are many such examples; see for instance \cite{example}.

Although (\ref{mainineq}) does not hold in general, it is natural to look for classes of modules where the
bound holds without the dimension assumption.

We prove that if $I$ and $J$ are weakly stable ideals then
\[
\reg \tor_i^R(R/I,R/J) \leq \reg R/I +\reg R/J + i\quad\text{for\;every }i,
\]
see Theorem \ref{main} and see Section \ref{wsi} for the definition of weakly stable ideals.

The last section is concerned with the regularity of $\ext^i_R(R/I,R)$ with $I$ weakly stable ideal.

\section{Background}

Throughout the paper $R=k[x_1,\ldots,x_n]$, with $k$ a field, denotes a graded polynomial ring with $|x_i|=1$ for every $i$. Let $M$ and $N$ be finitely generated graded $R$-modules. We denote by $M_i$ the $i$-th graded component of $M$. The supremum and infimum of a graded module $M$ are defined as
\[
\sup M=\sup\{i\mid M_i\neq0\}
\]
\[
\inf M\;=\inf\{i\mid M_i\neq0\}.
\]

We define the graded $R$-module $M(-a)$ by $M(-a)_d=M_{a+d}$, the shift of $M$ up by $a$ degrees. Let $\fm$ denote the ideal $(x_1,\ldots,x_n)$. The  $\fm$-torsion functor on the category of graded $R$-modules is defined by
\[
\Gamma_{\fm}(M)=\{x\in M: \fm^t x=0\quad \text{for some }\,t\}.
\]
The $i$-th local cohomology module of $M$, denoted $H_{\fm}^i(M)$, is the $i$-th right derived functor of $\Gamma_{\fm}(\_)$ in the category of graded $R$-modules, and morphisms of degree 0.

We set $a_i(M)=\sup(H_{\fm}^i(M))$; by \cite[3.5.4]{CohenMacaulay} $a_i(M)$ is finite unless $H_{\fm}^i(M)=0$ where we set $a_i(M)= - \infty$. The Castelnuovo-Mumford regularity of $M$ is then
\begin{equation*}
\reg M=\sup_i \{a_i(M)+i\}.
\end{equation*}

Regularity can also be computed with a minimal graded free resolution
\[
\cdots\rightarrow F_2 \rightarrow F_1 \rightarrow F_0 \rightarrow 0
\]
of $M$. Recall that $F_i= \bigoplus_j R(-j)^{\beta_{ij}}$, so $\beta_{ij}$ is the number of copies of $R(-j)$ in position $i$ in the resolution. The number
\[
t_i(M)=\sup\{j: \beta_{ij} \neq 0\},
\]
is the largest degree of an element in the basis of $F_i$; it is easily seen that
\[
t_i(M)=\sup(\tor_i^R(M,k)).
\]
It is proved, for example in \cite[2.2]{Chardin}, that

\begin{equation*}
\reg M=\sup_i \{t_i(M)-i\}.
\end{equation*}

\begin{chunk}
If $M$ is an $R$-module then $\reg M(-a)=\reg M+a$ for any $a\in\mathbb{Z}$. This can be checked by computing the regularity with a free resolution.

If $M$ has finite length then $\reg M=\sup M$.  This follows by computing regularity with local cohomology.
\end{chunk}

\begin{chunk}\label{lemses}
Let
\[
0 \rightarrow L \rightarrow M \rightarrow N  \rightarrow 0
\]
be an exact sequence of graded $R$-modules. Then
\begin{enumerate}
\item $\reg M\leq \max \{\reg L,\reg N\}$
\item $\reg L\leq \max \{\reg M,\reg N+1\}$
\item $\reg N\leq \max \{\reg M,\reg L-1\}$.
\end{enumerate}
This follows from the induced long exact sequence in local cohomology.
\end{chunk}

The next lemma is a straightforward consequence of the previous inequalities.
\begin{lemma}\label{ExactFinite}
If $K\xra{f} M\xra{t} N\xra{g} C$ is an exact sequence of graded $R$-modules, $K$ and $C$ have finite length then
\[
\reg M\leq\max\{\reg K,\reg N,\reg C+1\}.
\]
\end{lemma}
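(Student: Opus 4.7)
The plan is to split the four-term exact sequence at the middle image. Let $I=\im(t)$, so the given exactness yields two short exact sequences of graded $R$-modules:
\begin{equation*}
0 \to \im(f) \to M \to I \to 0 \quad\text{and}\quad 0 \to I \to N \to \im(g) \to 0,
\end{equation*}
using that $\ker(t)=\im(f)$ and $\ker(g)=\im(t)=I$.

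Next I would observe that $\im(f)$ is a quotient of $K$, hence has finite length; and $\im(g)$ is a submodule of $C$, hence also has finite length. For any finite length graded module $T$ one has $\reg T=\sup T$, and in each component $\sup(\im(f))\leq \sup K$ and $\sup(\im(g))\leq \sup C$. Therefore
\begin{equation*}
\reg \im(f) \leq \reg K \qquad\text{and}\qquad \reg \im(g) \leq \reg C.
\end{equation*}

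Now I would feed these inequalities into \ref{lemses}. Applying part (2) to the second short exact sequence gives
\begin{equation*}
\reg I \leq \max\{\reg N,\, \reg \im(g)+1\} \leq \max\{\reg N,\, \reg C+1\}.
\end{equation*}
Applying part (1) to the first short exact sequence then gives
\begin{equation*}
\reg M \leq \max\{\reg \im(f),\, \reg I\} \leq \max\{\reg K,\, \reg N,\, \reg C+1\},
\end{equation*}
which is the desired bound.

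There is no serious obstacle here; the only point requiring mild care is to justify that finite length is inherited by the relevant subquotients and that regularity is monotone in the expected way for such finite length pieces, which is handled by the identity $\reg T=\sup T$ for finite length $T$.
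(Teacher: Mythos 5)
Your proof is correct and follows essentially the same route as the paper: split the four-term sequence at $\im(t)$ into two short exact sequences, apply \ref{lemses}, and use finite length of $K$ and $C$ to bound $\reg\im(f)$ and $\reg\im(g)$. Your explicit justification via $\reg T=\sup T$ for finite length $T$ is a welcome detail that the paper leaves implicit.
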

\begin{proof}
The exact sequence induces exact sequences of $R$-modules
\[
0\rightarrow\im f\rightarrow M\rightarrow\im t\rightarrow0,\quad\quad\quad0\rightarrow\im t\rightarrow N\rightarrow\im g\rightarrow0.
\]
By \ref{lemses} these exact sequences give the following inequalities
\[
\reg M\leq\max\{\reg \im f, \reg \im t\}\quad\quad\quad\reg \im t\leq\max\{\reg N,\reg\im g+1\},
\]
and hence an inequality
\[
\reg M\leq\max\{\reg\im f,\reg N,\reg\im g+1\}.
\]
Since $K$ and $C$ have finite length $\reg\im f\leq\reg K$ and $\reg\im g\leq\reg C$.
\end{proof}

\begin{remark} \label{gamma}
Note that
\[
\reg M=\max\{\reg(\Gamma_\fm(M)), \reg(M/\Gamma_\fm(M))\}.
\]
This follows from the definition of regularity, since $H^0_\fm(M)=\Gamma_\fm(M)$.
\end{remark}

The following result is well-known.

\begin{lemma} \label{FiniteLength}
If $M$ has finite length then $\reg\tor^R_i(M,N)\leq\reg M+t_i(N)$.

In particular,
\[
\reg\tor_i^R(M,N)\leq\reg M+\reg N+i.
\]
\end{lemma}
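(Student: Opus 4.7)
The plan is to compute $\tor_i^R(M,N)$ as the homology of $M\otimes_R F_\bullet$ for a minimal graded free resolution $F_\bullet\to N$, and then to exploit the fact that $M$ having finite length forces $\tor_i^R(M,N)$ to have finite length as well; for finite length modules regularity coincides with the supremum, and the supremum is well-behaved under taking subquotients.

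Concretely, write $F_i=\bigoplus_j R(-j)^{\beta_{ij}(N)}$, so by the definition of $t_i(N)$ we have $\beta_{ij}(N)=0$ whenever $j>t_i(N)$. Then $\tor_i^R(M,N)$ arises as a subquotient of
\[
M\otimes_R F_i \;=\; \bigoplus_j M(-j)^{\beta_{ij}(N)}.
\]
This ambient module is a finite direct sum of shifts of a finite length module, hence itself has finite length; consequently so does $\tor_i^R(M,N)$.

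Combining the observation from the excerpt that $\reg T=\sup T$ for any finite length module $T$ with the elementary fact that $\sup T\leq\sup T'$ whenever $T$ is a subquotient of $T'$, we obtain
\[
\reg\tor_i^R(M,N) \;\leq\; \sup\bigoplus_j M(-j)^{\beta_{ij}(N)} \;=\; \sup M+t_i(N) \;=\; \reg M+t_i(N),
\]
where we used $\sup M(-j)=\sup M+j$ together with $\sup M=\reg M$. The \emph{in particular} inequality then follows from $t_i(N)\leq\reg N+i$, which is immediate from the identity $\reg N=\sup_i\{t_i(N)-i\}$ recorded earlier in Section~1. The argument is essentially bookkeeping, and the only step requiring care is tracking the grading shifts correctly.
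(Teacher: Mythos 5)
Your proof is correct, but it follows a genuinely different route from the paper's. The paper proves the lemma by induction on $\sup M-\inf M$: it peels off the lowest graded piece via the exact sequence $0\to M_{>a}\to M\to k(-a)^m\to 0$, computes the base case $M=k(-a)^m$ directly, and assembles the inductive step from the long exact sequence of Tor together with Lemma \ref{ExactFinite}. You instead bound $\tor_i^R(M,N)$ as a graded subquotient of $M\otimes_R F_i=\bigoplus_j M(-j)^{\beta_{ij}(N)}$, note that this ambient module has finite length with supremum $\reg M+t_i(N)$, and conclude using the facts that regularity equals supremum for finite length modules and that $\sup$ is monotone under graded subquotients. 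Your argument is shorter and avoids both the induction and the homological machinery (in particular it makes no use of Lemma \ref{ExactFinite} or the long exact sequence), and it actually establishes the slightly stronger fact that $\sup\tor_i^R(M,N)\le\sup M+t_i(N)$ for any finite length $M$; what the paper's approach buys is mainly expository consistency, since the same pattern of filtering by $\Gamma_\fm$ and applying Lemma \ref{ExactFinite} recurs in the proof of Theorem \ref{main}. The only step in your write-up that deserves an explicit word is the monotonicity of $\sup$ under subquotients, which holds because homology of a complex of graded modules with degree-zero differentials is a graded subquotient of each term; with that noted, the bookkeeping of shifts is exactly as you describe.
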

\begin{proof}
Write $M=\bigoplus_{i=a}^b M_i$ with $a=\inf M$ and $b=\sup M$. We use induction on $b-a$. If $b=a$ then $M=k(-a)^m$, and therefore,
\begin{align*}
\reg\tor^R_i(M,N)&=\reg\tor^R_i(k(-a),N)\\
&=\reg\tor^R_i(k,N)(-a)\\
&=\reg\tor^R_i(k,N)+a\\
&=\reg\tor^R_i(k,N)+\reg M\\
&=t_i(N)+\reg(M).
\end{align*}

Now assume $b-a>0$. Denote by $M_{>a}$ the module $\bigoplus_{i=a+1}^bM_i$. The short exact sequence
\[
0\rightarrow M_{>a}\rightarrow M\rightarrow k(-a)^m\rightarrow0
\]
induces, for each $i$, an exact sequence
\[
\tor_i^R(M_{>a},N)\rightarrow\tor_i^R(M,N)\rightarrow\tor_i^R(k,N)^m(-a).
\]
By induction and Lemma \ref{ExactFinite}
\begin{align*}
\reg\tor^R_i(M,N)&\leq \max\{\reg\tor_i^R(M_{>a},N),\reg\tor_i^R(k,N)(-a)\}\\
                 &\leq\max\{\reg M_{>a}+t_i(N),a+t_i(N)\}\leq\reg M+t_i(N).
\end{align*}
The last assertion follows as $\reg N=\sup\{t_i(N)-i\}$.
\end{proof}

\section{Regularity of Tor for weakly stable ideals} \label{wsi}

We study weakly stable ideals. Let $I$ be a monomial ideal, for a monomial $ u \in I$ we let $m(u)$ be the maximum index of a variable appearing in $u$ and we let $l(u)$ be the highest power of $x_{m(u)}$ dividing $u$.

\begin{definition}
A monomial ideal $I$ is \emph{weakly stable} provided the following ``exchange property'' is satisfied; for any monomial $u\in I$ and for any $j<m(u)$ there exists a $k$ such that $x_j^ku/x_{m(u)}^{l(u)}\in I$.
\end{definition}

\begin{remark}
It is an easy exercise to prove that $I$ is weakly stable if and only if the ``exchange property'' is verified only for the generators of $I$.
\end{remark}

\begin{remark}\label{CavigliaLex}
There is also an algebraic characterization of weakly stable ideals. In \cite[4.1.5]{CavigliaThesis} Caviglia proved that a monomial ideal $I$ is weakly stable if and only if $\Ass I\subseteq\{(x_1,\ldots,x_t)\mid t=0,1,\ldots,n\}$.
\end{remark}

\begin{example}
 Let $I=(x_1^2,x_1x_2,x_1x_3,x_2^2)$. Clearly the 'exchange property' holds for $x_1^2$ and $x_2^2$. We have $m(x_1x_2)=2$ and $l(x_1x_2)=1$. For $j=1$ we take $k=1$ and we can see that $x_1x_1x_2/x_2$ is in $I$. The remaining generator is similar. The ideal $I$ is primary and the radical of $I$ is the ideal $(x_1,x_2)$.
\end{example}

\begin{remark}
\label{colon} If $I$ is a weakly stable ideal and $J$ is a monomial ideal, then $(I:J)$ is a weakly stable ideal; see \cite[4.1.4(2)]{CavigliaThesis}.
\end{remark}

\begin{lemma}\label{IPrime}
Suppose $I$ is a weakly stable ideal of $R$ and set
\[
I^\prime=\bigcup_{m=1}^\infty(I:x_n^m).
\]
Then $I^\prime$ is weakly stable and $\Gamma_\fm(R/I)=I^\prime/I$.
\end{lemma}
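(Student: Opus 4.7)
The proof splits cleanly into two assertions: (i) $I'$ is weakly stable, and (ii) $\Gamma_\fm(R/I) = I'/I$. The plan is to dispatch (i) quickly from Remark \ref{colon}, and to put the real work into (ii), where one inclusion is formal and the reverse uses the exchange property.

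For (i), I would observe that the ascending chain
\[
(I:x_n) \subseteq (I:x_n^2) \subseteq (I:x_n^3) \subseteq \cdots
\]
of ideals stabilizes, since $R$ is Noetherian. Hence $I' = (I : x_n^m)$ for some $m$, and because $(x_n^m)$ is a monomial ideal, Remark \ref{colon} with $J=(x_n^m)$ immediately gives that $I'$ is weakly stable.

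For (ii), the inclusion $\Gamma_\fm(R/I) \subseteq I'/I$ is automatic: if $\fm^t f \subseteq I$ then in particular $x_n^t f \in I$, so $f \in (I:x_n^t) \subseteq I'$. For the reverse inclusion I would take $f \in I'$ and aim to produce a $t$ with $\fm^t f \subseteq I$. Since $I$ is a monomial ideal, membership in $I$ is tested term-by-term, so it suffices to treat the case of a single monomial $f$. Write $f = x_n^a g$ with $g$ a monomial involving only $x_1, \ldots, x_{n-1}$. By hypothesis the monomial $u := x_n^{a+m} g$ lies in $I$; unless $u = 1$ (in which case $I = R$ and the claim is trivial) we have $m(u) = n$ and $l(u) = a+m$. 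Applying the exchange property to $u$ for each $j < n$ yields an exponent $k_j$ with
\[
x_j^{k_j} g \;=\; x_j^{k_j}\, u/x_n^{l(u)} \;\in\; I.
\]
Multiplying by $x_n^a$ gives $x_j^{k_j} f \in I$ for every $j < n$, and $x_n^m f \in I$ holds by assumption. Setting $K := \max\{m, k_1, \ldots, k_{n-1}\}$, a pigeonhole argument shows $\fm^{n(K-1)+1} \subseteq (x_1^K, \ldots, x_n^K)$, and therefore $\fm^{n(K-1)+1} f \subseteq I$, as required.

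The main obstacle is precisely the last step: passing from the single relation $x_n^m f \in I$ to a uniform relation $\fm^t f \subseteq I$. For a general monomial ideal this implication can fail, and the weakly stable hypothesis is tailor-made to bridge the gap, converting one high power of $x_n$ dividing an element of $I$ into high powers of each of the other variables $x_1, \ldots, x_{n-1}$ via the exchange property.
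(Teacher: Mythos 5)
Your proposal is correct and follows essentially the same route as the paper: part (i) is reduced to Remark \ref{colon} by observing $I'=(I:x_n^m)$ for a single $m$, and part (ii) uses the exchange property applied to $x_n^m f$ to convert the relation $x_n^m f\in I$ into relations $x_j^{k_j}f\in I$ for all $j$, whence $\fm^t f\subseteq I$ for suitable $t$. You merely spell out more explicitly the exchange-property application and the final pigeonhole exponent that the paper leaves implicit.
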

\begin{proof}
Notice that $I^\prime$ is the ideal of $R$ generated by the monomials obtained by setting $x_n=1$ in the generators of $I$.
First we show $I^\prime$ is weakly stable. We may assume $x_n|m$ for some $m \in G(I)$ where $G(I)$ denotes the set of minimal generators of $I$. Notice that if
\[
i=\max\{j\mid x_n^j\;\text{divides some }u\in G(I)\}
\]
then $I^\prime=(I:x_n^i)$ and this ideal is weakly stable by Remark \ref{colon}.

It is clear that $\Gamma_\fm(R/I)=\bigcup_i(I:\fm^i)/I$. We claim $\bigcup_i(I:\fm^i)=\bigcup_i(I:x_n^i)$. Take $f\in\bigcup_i(I:x_n^i)$ a monomial so that $fx_n^i\in I$ for some $i$. Since $I$ is weakly stable we can choose a $k$ such that $fx_j^k\in I$ for every $j$; hence, $f\in(I:\fm^{kn})$. The other inclusion is obvious.
\end{proof}

We are now ready to prove the main theorem.

\begin{theorem}\label{main}
If I and J are weakly stable ideals then
\[
\reg \tor_i^R(R/I,R/J) \leq \reg R/I +\reg R/J + i\quad\text{for\;every }i.
\]
\end{theorem}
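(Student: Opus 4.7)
The plan is to proceed by induction on the number $n$ of variables. The base case $n=0$ is trivial: $R=k$, every weakly stable ideal is $0$ or $R$, and $\tor^R_i$ vanishes for $i\geq 1$ while the $i=0$ bound is immediate. For the inductive step I pass from the weakly stable ideals $I,J\subseteq R$ to auxiliary ideals that depend only on $x_1,\ldots,x_{n-1}$ and then descend to $S=k[x_1,\ldots,x_{n-1}]$, invoking the hypothesis on $S$.

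Following Lemma \ref{IPrime}, set $I'=\bigcup_m(I:x_n^m)$ and $J'=\bigcup_m(J:x_n^m)$. These are weakly stable ideals; moreover $I'/I=\Gamma_\fm(R/I)$ and $J'/J=\Gamma_\fm(R/J)$ have finite length, as they are finitely generated $\fm$-torsion modules. The proof of Lemma \ref{IPrime} shows that the generators of $I'$ and $J'$ involve only $x_1,\ldots,x_{n-1}$, so $I'=I_0R$ and $J'=J_0R$ for weakly stable ideals $I_0,J_0\subseteq S$ (the exchange property transfers because $m(u)\leq n-1$ for every minimal generator). From $0\to I'/I\to R/I\to R/I'\to 0$ tensored with $R/J$ I obtain a four-term exact sequence whose outer terms $\tor^R_i(I'/I,R/J)$ and $\tor^R_{i-1}(I'/I,R/J)$ have finite length; Lemma \ref{FiniteLength} bounds their regularities, and Lemma \ref{ExactFinite} combined with Remark \ref{gamma} (which yields $\reg(I'/I)\leq\reg R/I$ and $\reg R/I'\leq\reg R/I$) gives
\[
\reg\tor^R_i(R/I,R/J)\leq\max\{\reg R/I+\reg R/J+i,\ \reg\tor^R_i(R/I',R/J)\}.
\]
The same argument applied to $0\to J'/J\to R/J\to R/J'\to 0$ tensored with $R/I'$ yields
\[
\reg\tor^R_i(R/I',R/J)\leq\max\{\reg R/I+\reg R/J+i,\ \reg\tor^R_i(R/I',R/J')\}.
\]

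It then remains to bound $\reg\tor^R_i(R/I',R/J')$. Since $R$ is a polynomial extension of $S$, hence flat, a minimal graded free $S$-resolution of $S/I_0$ extends via $-\otimes_S R$ to a minimal graded free $R$-resolution of $R/I'\cong(S/I_0)\otimes_S R$; this preserves graded Betti numbers, so $\reg R/I'=\operatorname{reg}_S S/I_0$ (and analogously for $J'$), and it also produces the base-change identity $\tor^R_i(R/I',R/J')\cong\tor^S_i(S/I_0,S/J_0)\otimes_S R$, with the same regularity comparison on the Tor modules. The inductive hypothesis applied in $S$ then gives
\[
\operatorname{reg}_S\tor^S_i(S/I_0,S/J_0)\leq\operatorname{reg}_S S/I_0+\operatorname{reg}_S S/J_0+i=\reg R/I'+\reg R/J'+i\leq\reg R/I+\reg R/J+i,
\]
closing the induction. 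The main obstacle is the descent to $S$: one must verify that $I_0$ and $J_0$ really are weakly stable in $S$ (so that the inductive hypothesis applies), and then use the flatness of $R$ over $S$ to simultaneously extract the base-change isomorphism for $\tor$ and the equality of regularity indices across the two rings.
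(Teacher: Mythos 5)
Your argument is correct, and it rests on the same ingredients as the paper's proof---the saturation $I'=\bigcup_m(I:x_n^m)$ of Lemma \ref{IPrime}, the finite-length exact-sequence bound from Lemmas \ref{ExactFinite} and \ref{FiniteLength} combined with Remark \ref{gamma}, and base change along the flat extension $S\to R$---but the induction is organized differently. The paper runs a Noetherian induction on the poset of pairs of weakly stable ideals: it picks a maximal counterexample $(I,J)$, reduces by the symmetry of Tor and descent to $S$ to the case where $x_n$ divides a minimal generator of $I$, and derives a contradiction from the strict inclusion $I\subsetneq I'$ using only the single sequence $0\to\Gamma_\fm(R/I)\to R/I\to R/I'\to 0$. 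You instead induct on the number of variables, which forces you to saturate \emph{both} ideals so that the pair $(I',J')$ is extended from $S=k[x_1,\ldots,x_{n-1}]$; the price is a second pass through the exact-sequence argument (tensoring the $J$-sequence with $R/I'$, which is legitimate since Lemma \ref{FiniteLength} only needs one argument of finite length) and the verification that $I_0=I'\cap S$ and $J_0=J'\cap S$ are weakly stable in $S$, both of which you handle correctly (for the latter, one also uses that the exchange property need only be checked on generators, and that a monomial of $S$ lying in $I'=I_0R$ already lies in $I_0$ because $R$ is free over $S$ on the powers of $x_n$). What your version buys is transparency of termination: the descent to $S$, which the paper invokes rather tersely inside the maximality argument (``it is enough to prove the theorem for $S$''), becomes the explicit inductive step.
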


\begin{proof}
Consider the following set

\begin{align*}
\mathfrak{F}=\{(I,J)\mid \;& I,J\, \text{are weakly stable ideals and}\\
&\reg\tor_i^R(R/I,R/J)>\reg R/I+\reg R/J+i\text{ for some } i\}.
\end{align*}
This set is partially ordered as follows: $(I,J)\leq(I^\prime, J^\prime)$ if $I\subseteq I^\prime$ and $J\subseteq J^\prime$.
Assume that $\mathfrak{F}\neq\emptyset$, we seek a contradiction. Since $R$ is noetherian there exists a maximal element $(I,J)$.

We may assume $x_n|m$ for some $m\in G(I)\cup G(J)$. Otherwise, we let $S=k[x_1,\ldots,x_{n-1}]$, then
\[
\tor_i^R(R/I,R/J)\cong\tor^S_i(S/I\cap S,S/J\cap S)\otimes_SR\quad\text{for\;every }i.
\]
Regularity does not change under faithfully flat extensions; hence it is enough to prove the theorem for $S$.
Moreover, as Tor is symmetric we can assume that $x_n|m$ for some $m\in G(I)$.

By Lemma \ref{IPrime}, $\Gamma_\fm(R/I)=I^\prime/I$, so there is an exact sequence
\[
0\rightarrow \Gamma_\fm(R/I)\rightarrow R/I\rightarrow R/I^\prime\rightarrow0
\]
which induces, for each $i$, an exact sequence
\begin{align*}
\cdots & \rightarrow\tor_i^R(\Gamma_\fm(R/I),R/J)\rightarrow\tor_i^R(R/I,R/J)\rightarrow\tor^R_i(R/I^\prime,R/J)\rightarrow\\
&\rightarrow\tor_{i-1}^R(\Gamma_\fm(R/I),R/J).
\end{align*}
The outside terms have finite length, since $\Gamma_\fm(R/I)$ has finite length, and therefore by Lemma \ref{ExactFinite}
\begin{align*}
\reg\tor_i^R(R/I,R/J)
\leq\max\{ &\reg\tor^R_i(\Gamma_\fm(R/I),R/J),\\
& \reg\tor_i^R(R/I^\prime,R/J),\\
&\reg\tor_{i-1}^R(\Gamma_\fm(R/I),R/J)+1\}.
\end{align*}
We examine the terms on the right hand side. By \ref{FiniteLength} and \ref{gamma} we have
\begin{align*}
\reg\tor_i^R(\Gamma_\fm(R/I),R/J) &\leq\reg \Gamma_\fm(R/I)+\reg R/J+i\\
                               &\leq\reg R/I+\reg R/J+i
\end{align*}
and
\begin{align*}
\reg\tor_{i-1}^R(\Gamma_\fm(R/I),R/J)+1 &\leq\reg \Gamma_\fm(R/I)+\reg R/J+i-1+1\\
                                     &\leq\reg R/I+\reg R/J+i.
\end{align*}
By \ref{IPrime} we know $I^\prime$ is weakly stable. As $I \subsetneq I^\prime$ and the pair $(I,J)$ is maximal in $\mathfrak{F}$
\begin{align*}
\reg\tor_i^R(R/I^\prime,R/J) &\leq\reg R/I^\prime+\reg R/J+i\\
                             &\leq\reg R/I+\reg R/J+i.
\end{align*}
The final inequality follows by \ref{gamma} since
\[
R/I^\prime=\frac{R/I}{\Gamma_\fm(R/I)}.
\]
Putting all these inequalities together gives us
\[
\reg\tor_i^R(R/I,R/J)\leq\reg R/I+\reg R/J+i\quad\text{for\;every }i.
\]
This is a contradiction since $(I,J)\in\mathfrak{F}$.
\end{proof}
\begin{remark}
The inequality in Theorem \ref{main} is useful because Caviglia gives a formula for the regularity of weakly stable ideals (see \cite[4.1.10]{CavigliaThesis}).
\end{remark}
\section{Regularity of Ext for weakly stable ideals}
Let $M$ be an $R$-module of dimension $d$.
Regularity of $\ext^i_R(M,R)$ was studied, for example, in \cite{HoaHyry}; here we study it in the case $M=R/I$ with $I$ a weakly stable ideal.

\begin{lemma}
Let $R=k[x_1,\ldots,x_n]$. If $M$ is an $R$-module of finite length then $\Ext iRMR=0$ for $i<n$ and
\[
\reg\Ext nRMR=-n-\inf M.
\]
\end{lemma}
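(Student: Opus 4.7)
The plan is to proceed by induction on $\sup M - \inf M$.

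For the base case $\sup M = \inf M = a$, the module $M$ is concentrated in degree $a$, so $M \cong k(-a)^m$ with $m = \dim_k M_a \geq 1$. The Koszul complex on $x_1, \ldots, x_n$ is a minimal graded free resolution of $k$ over $R$; dualizing into $R$ yields another Koszul-style complex. Since $x_1, \ldots, x_n$ is a regular sequence in $R$, this dual complex has vanishing cohomology except at the top, giving $\Ext{i}{R}{k}{R}=0$ for $i < n$ and $\Ext{n}{R}{k}{R} \cong k(n)$, concentrated in internal degree $-n$. Shifting, $\Ext{i}{R}{k(-a)^m}{R}=0$ for $i < n$, and $\Ext{n}{R}{k(-a)^m}{R} \cong k(n+a)^m$ lives in degree $-n-a$. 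This is a finite length module, so its regularity equals its supremum, namely $-n-a = -n - \inf M$.

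For the inductive step, let $a = \inf M$ and $m = \dim_k M_a$. The submodule $M_{>a} = \bigoplus_{d > a} M_d$ satisfies $\inf M_{>a} \geq a+1$ and hence $\sup M_{>a} - \inf M_{>a} < \sup M - \inf M$. The short exact sequence
\[
0 \to M_{>a} \to M \to k(-a)^m \to 0
\]
induces a long exact sequence of $\ext$ groups. By the induction hypothesis both outer terms vanish in degrees below $n$, so the same holds for $M$. Because $R$ has global dimension $n$, $\Ext{n+1}{R}{k(-a)^m}{R}=0$, and we extract the short exact sequence
\[
0 \to \Ext{n}{R}{k(-a)^m}{R} \to \Ext{n}{R}{M}{R} \to \Ext{n}{R}{M_{>a}}{R} \to 0.
\]

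By the base case the first term has supremum $-n-a$, and by induction the third has supremum $-n - \inf M_{>a} \leq -n-a-1$. For a short exact sequence of graded modules the sup of the middle term equals the maximum of the sups of the outer terms, so $\sup \Ext{n}{R}{M}{R} = -n-a = -n - \inf M$. As $\Ext{n}{R}{M}{R}$ has finite length (being an extension of such), its regularity equals its supremum, finishing the proof. The main obstacle is simply keeping track of the graded shifts through the Koszul self-duality computation in the base case; once that is done, the inductive step is a standard long-exact-sequence argument combined with a degree count.
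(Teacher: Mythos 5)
Your proof is correct, but it follows a genuinely different route from the one the paper ultimately uses. The paper's proof is a one-step application of graded local duality: $\Hom_R(H^i_\fm(M),E)\cong\ext^{n-i}_R(M,R)(-n)$, combined with the fact that a finite length module has $H^i_\fm(M)=0$ for $i>0$ and $H^0_\fm(M)=M$; the vanishing for $i<n$ and the equality $\reg\ext^n_R(M,R)=-n-\inf M$ then both drop out of the single identification $\ext^n_R(M,R)(-n)\cong\Hom_R(M,E)$, whose supremum is $-\inf M$. Your argument instead filters $M$ by its graded pieces, computes the base case $M=k(-a)^m$ by self-duality of the Koszul complex, and propagates the answer through the long exact sequence of $\ext$, using that $\ext^{n+1}$ vanishes by global dimension and that the supremum of the middle term of a short exact sequence of graded modules is the maximum of the sups of the outer terms (true degreewise). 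All the steps check out, including the strict decrease of $\sup M_{>a}-\inf M_{>a}$ and the fact that the injection of $\ext^n_R(k(-a)^m,R)\cong k(n+a)^m$ forces the sup of the middle term to be exactly $-n-a$, so you do obtain the asserted equality and not merely an upper bound. What the duality proof buys is brevity and no induction; what your proof buys is that it only uses the Koszul resolution and elementary long-exact-sequence bookkeeping, with no appeal to local duality or Matlis duality. (Amusingly, the paper's source contains a commented-out draft of essentially your inductive argument, which the authors discarded in favor of the duality proof.)
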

\begin{samepage}
\begin{proof}
By graded local duality, see \cite[Theorem 3.6.19]{CohenMacaulay}, there is the following isomorphism:
\[
\Hom_{R}(H^i_\fm(M),E)\cong\ext_{R}^{n-i}(M,R(-n))\cong\ext_{R}^{n-i}(M,R)(-n),
\]
where $E$ is the injective hull of $k$. Since $M$ has finite length all the local cohomology modules are zero for $i>0$ and $H^0_\fm(M)=M$. This gives $\ext^i_{R}(M,R)=0$ for $i<n$. The last assertion follows since
\[
\reg\Hom_{R}(M,E)=\sup\Hom_{R}(M,E)=-\inf M. \qedhere
\]

\end{proof}
\end{samepage}

\begin{theorem} \label{main2}
If I is a weakly stable ideal then
\[
\reg\Ext iR{R/I}R\leq-i\quad\text{for\;every }i.
\]
\end{theorem}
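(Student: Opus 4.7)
The plan is a noetherian-induction argument parallel to Theorem \ref{main}. Let $\mathfrak{F}$ denote the set of weakly stable ideals $I \subseteq R$ violating the conclusion, i.e., with $\reg \Ext{i}{R}{R/I}{R} > -i$ for some $i$, partially ordered by inclusion. If $\mathfrak{F} \neq \emptyset$, noetherianity of $R$ produces a maximal element $I$, and I will derive a contradiction. First I would reduce to the case where $x_n$ divides some $u \in G(I)$. If this fails, then $I = I_0 R$ with $I_0 = I \cap S$ a weakly stable ideal of $S = k[x_1,\ldots,x_{n-1}]$, and one has $\Ext{i}{R}{R/I}{R} \cong \Ext{i}{S}{S/I_0}{S} \otimes_S R$, obtained by tensoring an $S$-free resolution of $S/I_0$ with $R$ and applying the adjunction $\Hom_R(- \otimes_S R, R) \cong \Hom_S(-, R)$ together with the decomposition $R = \bigoplus_{k \geq 0} S(-k)$ of $R$ as a graded $S$-module. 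Since extension of scalars along the polynomial inclusion $S \hookrightarrow R$ preserves minimal free resolutions, and hence regularity, this reduces the claim to $S$, which we may assume by induction on the number of variables.

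With $x_n \mid u$ for some $u \in G(I)$, set $I' = \bigcup_{t \geq 1}(I : x_n^t)$ as in Lemma \ref{IPrime}; then $I \subsetneq I'$ (since $u/x_n \in I' \setminus I$), $I'$ is weakly stable, and $\Gamma_\fm(R/I) = I'/I$ has finite length. Applying $\ext_R(-, R)$ to
\[
0 \rightarrow \Gamma_\fm(R/I) \rightarrow R/I \rightarrow R/I' \rightarrow 0
\]
yields a long exact sequence. The preceding lemma supplies $\Ext{j}{R}{\Gamma_\fm(R/I)}{R} = 0$ for $j < n$ and $\reg \Ext{n}{R}{\Gamma_\fm(R/I)}{R} = -n - \inf \Gamma_\fm(R/I) \leq -n$, the last inequality because $\Gamma_\fm(R/I) \subseteq R/I$ is concentrated in nonnegative degrees.

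For $i < n$ both $\Ext{i-1}{R}{\Gamma_\fm(R/I)}{R}$ and $\Ext{i}{R}{\Gamma_\fm(R/I)}{R}$ vanish, so $\Ext{i}{R}{R/I}{R} \cong \Ext{i}{R}{R/I'}{R}$, whose regularity is at most $-i$ by the maximality of $I$ in $\mathfrak{F}$. For $i > n$ both sides vanish since $\pdim R/I \leq n$. Finally, for $i = n$, using also $\Ext{n+1}{R}{R/I'}{R} = 0$, the long exact sequence collapses to
\[
0 \rightarrow \Ext{n}{R}{R/I'}{R} \rightarrow \Ext{n}{R}{R/I}{R} \rightarrow \Ext{n}{R}{\Gamma_\fm(R/I)}{R} \rightarrow 0,
\]
so \ref{lemses}(1), maximality of $I$, and the bound $\leq -n$ on both outer terms give $\reg \Ext{n}{R}{R/I}{R} \leq -n$, contradicting $I \in \mathfrak{F}$.

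The step I anticipate requiring the most care is the change-of-rings reduction, namely justifying the Ext isomorphism $\Ext{i}{R}{R/I}{R} \cong \Ext{i}{S}{S/I_0}{S} \otimes_S R$ and the preservation of regularity under extension of scalars along $S \hookrightarrow R$ (which amounts to the invariance of graded Betti numbers under this flat extension). Once these are in place, the rest is careful bookkeeping with the long exact sequence of Ext, powered by the vanishing of $\Ext{j}{R}{-}{R}$ on finite-length modules for $j < n$ furnished by the preceding lemma.
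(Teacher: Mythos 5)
Your proposal is correct and follows essentially the same argument as the paper: a maximal counterexample in the poset of weakly stable ideals, reduction to the case $x_n\mid u$ for some $u\in G(I)$ via the change-of-rings isomorphism, and the long exact sequence of Ext applied to $0\to\Gamma_\fm(R/I)\to R/I\to R/I'\to 0$, splitting into the isomorphism for $i<n$ and the short exact sequence for $i=n$. The extra details you supply (why $u/x_n\in I'\setminus I$, the bound $\inf\Gamma_\fm(R/I)\geq 0$, and the trivial case $i>n$) are all correct and only make explicit what the paper leaves implicit.
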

\begin{proof}
Set
\begin{align*}
\mathfrak{F}=\{I\mid \;& I\, \text{is a weakly stable ideal such that}\\
&\reg\ext^i_R(R/I,R)>-i\text{ for some } i\}.
\end{align*}
Notice that $\mathfrak{F}$ is partially ordered by inclusion of ideals.

The theorem asserts that $\mathfrak{F}$ is empty, so we assume it is not and argue by contradiction. Since $R$ is noetherian there exists $I\in\mathfrak{F}$ a maximal element.

We may assume $x_n|m$ for some $m\in G(I)$; otherwise, let $S=k[x_1,\ldots,x_{n-1}]$. Then
\[
\ext_R^i(R/I,R)\cong\ext^i_S(S/I\cap S,S)\otimes_SR\quad\text{for\;every }i
\]
and regularity does not change under faithfully flat extensions. Hence, it is enough to prove the theorem for $S$.

By Lemma \ref{IPrime} we have $\Gamma_\fm(R/I)=I^\prime/I$, with $I^\prime$ weakly stable and $I\subsetneqq I^\prime$ so by maximality the assertion holds for $I^\prime$.

The short exact sequence
\[
0\rightarrow \Gamma_\fm(R/I)\rightarrow R/I\rightarrow R/I^\prime\rightarrow0
\]
induces, for each $i$, an exact sequence
\begin{align*}
&\hphantom{\rightarrow}\Ext {i-1}R{\Gamma_\fm(R/I)}R\rightarrow\\
\rightarrow \Ext iR{R/I^\prime}R\rightarrow\Ext iR{R/I}R&\rightarrow\Ext iR{\Gamma_\fm(R/I)}R.
\end{align*}
If $i<n$ then, since $\Gamma_\fm(R/I)$ has finite length, the outside terms are zero, giving the isomorphism $\Ext iR{R/I^\prime}R\cong\Ext iR{R/I}R$;  hence, the assertion holds for $I$ and $i<n$. If $i=n$ we get a short exact sequence
\[
0\rightarrow\Ext nR{R/I^\prime}R\rightarrow\Ext nR{R/I}R\rightarrow\Ext nR{\Gamma_\fm(R/I)}R\rightarrow 0.
\]
As the bound holds for $I^\prime$ and $\Gamma_\fm(R/I)$ has finite length
\begin{align*}
\reg\Ext nR{R/I}R\leq\max\{&\reg\Ext nR{R/I^\prime}R,\\
&\reg\Ext nR{\Gamma_\fm(R/I)}R\}\leq-n\\
\end{align*}
Thus the bound holds for $I$ and for every $i$; this is the desired contradiction.
\end{proof}

The previous result can be also deduced from a result of Hoa and Hyry. They prove (see \cite[Proposition 22]{HoaHyry}) that if $M$ is a sequentially Cohen-Macaulay module (see \cite{SCM} for the definition) then
\[
\reg(\ext_R^i(M,R))\leq -i-\inf M\quad\text{for\;every }i.
\]
Caviglia and Sbarra proved (see \cite[1.10]{CavigliaSbarra}) that if $I$ is weakly stable then $R/I$ is sequentially CM, hence Hoa and Hyry's inequality reduces to
\[
\reg(\ext^i_R(R/I,R))\leq -i\quad\text{for\;every }i.
\]

\section*{Acknowledgements}

Most of this work was carried out at the PRAGMATIC workshop at the University of Catania in the summer of $2014$. Thanks go to the organizers of this conference, Alfio Ragusa, Giuseppe Zappal\`{a}, Francesco Russo and Salvatore Giuffrida. We would also like to thank the lecturers Aldo Conca, Srikanth Iyengar and Anurag Singh. Special thanks go to Aldo Conca for suggesting the project and for many informative discussions and to Srikanth Iyengar for useful comments.

\end{document}